\DeclareMathOperator\Aut{Aut}
\DeclareMathOperator\Sym{Sym}
\DeclareMathOperator\id{id}
\DeclareMathOperator\bbN{\mathbb{N}}
\DeclareMathOperator\calF{\mathcal{F}}
\DeclareMathOperator\calH{\mathcal{H}}
\DeclareMathOperator\calP{\mathcal{P}}
\theoremstyle{remark}
\newtheorem{theorem}{Theorem}
\newtheorem{lemma}[theorem]{Lemma}
\newtheorem{definition}[theorem]{Definition}
\newtheorem{remark}[theorem]{Remark}
\newtheorem{proposition}[theorem]{Proposition}
\newtheorem*{rep@theorem}{\rep@title}
\newcommand{\newreptheorem}[2]{%
\newenvironment{rep#1}[1]{%
 \def\rep@title{#2 \scshape \ref{##1}}%
 \begin{rep@theorem}}%
 {\end{rep@theorem}}}
\title{Prime Localizations of Burger-Mozes type groups}
\author{Stephan Tornier}
\date{\today}
\begin{document}

\begin{abstract}
This article concerns Burger--Mozes universal groups acting on regular trees locally like a given permutation group of finite degree. We also consider locally isomorphic generalizations of the former due to Le Boudec and Lederle. For a large class of such permutation groups and primes $p$ we determine their local $p$-Sylow subgroups as well as subgroups of their $p$-localization, which is identified as a group of the same type in certain cases.
\end{abstract}

\maketitle
\vspace{-0.5cm}

\section{Introduction}

The concept of prime localization of a totally disconnected locally compact group $G$ was introduced by Reid in \cite{Rei13}. Let $p$ be prime. A \emph{local $p$-Sylow subgroup} of $G$ is a maximal pro-$p$ subgroup of a compact open subgroup of $G$. The \emph{$p$-localization} $G_{(p)}$ of $G$ is defined as the commensurator $\mathrm{Comm}_{G}(S)$ of a local $p$-Sylow subgroup $S$ of $G$, equipped with the unique group topology which makes the inclusion of $S$ into $G_{(p)}=\mathrm{Comm}_{G}(S)$ continuous and open. We refer the reader to \cite{Rei13} for general properties of prime localization and its applications, of which we highlight the scale function introduced by Willis in \cite{Wil94}.

For a set $\Omega$ of cardinality $d\in\bbN_{\ge 3}$ and $\smash{F\le F'\le\Sym(\Omega)}$, the Burger--Mozes group $\mathrm{U}(F)$ and the Le Boudec group $\mathrm{G}(F,F')$ act on a suitably colored $d$-regular tree $T_{d}$ with local action prescribed by $F$ and $F'$. Lederle's coloured Neretin group $\mathrm{N}(F)$ consists of almost automorphisms of $T_{d}$ associated to $\mathrm{U}(F)$.

For a large family of the above groups, we determine local $p$-Sylow subgroups in terms of a $p$-Sylow subgroup of $F$. Let $T\subseteq T_{d}$ denote a finite subtree. For $H\le\Aut(T_{d})$ we let $H_{T}$ denote the pointwise stabilizer of $T$ in $H$.

\begin{repproposition}{prop:local_sylow_uf}
Let $F\!\le\!\Sym(\Omega)$ and $F(p)\le F$ a $p$-Sylow subgroup. Then $\mathrm{U}(F(p))_{T}$ is a $p$-Sylow subgroup of $\mathrm{U}(F)_{T}$ if and only if so is $F(p)_{\omega}\le F_{\omega}$ for all $\omega\in\Omega$.
\end{repproposition}

After collecting criteria and examples for the above situation we determine general subgroups of their $p$-localization which we use to identify the latter as a group of the same type in certain cases. Recall that $\mathrm{U}(F)=\mathrm{G}(F,F)$. In the following, $\smash{\widehat{F}}$ denotes the maximal subgroup of $\Sym(\Omega)$ preserving the partition $F\backslash\Omega$ setwise.

\begin{reptheorem}{thm:localization_uf}
Let $\smash{F\le F'\le\widehat{F}\le\Sym(\Omega)}$ and $F(p)\le F$ a $p$-Sylow subgroup of~$F$. Assume that we have $F\backslash\Omega=F(p)\backslash\Omega$ and $N_{F_{\omega}'}(F(p)_{\omega})=F(p)_{\omega}$ for all $\omega\in\Omega$. Then~$\mathrm{G}(F,F')_{(p)}=\mathrm{G}(F(p),F')$.\end{reptheorem}

\begin{reptheorem}{thm:localization_nf}
Let $F\le\Sym(\Omega)$ and $F(p)\le F$ a $p$-Sylow subgroup. If~$F\backslash\Omega=F(p)\backslash\Omega$ and $N_{\widehat{F}_{\omega}}(F(p)_{\omega})=F(p)_{\omega}$ for all $\omega\in\Omega$ then $\mathrm{N}(F)_{(p)}=\mathrm{N}(F(p))$.
\end{reptheorem}

\subsubsection*{Acknowledgements} The author would like to thank The University of Newcastle at which this research was carried out for its hospitality and its group theory seminar, in particular Colin Reid, for many helpful discussions. The author is particularly grateful to George Willis for his ongoing support, initiation of the seminar and financial aid through the ARC grant DP120100996. Finally, the author benefitted from partial support through the SNSF Doc.Mobility fellowship 172120, and appreciates a referee's comments.

\section{Preliminaries}\label{sec:introduction} In order to provide concise definitions of Burger--Mozes type groups, we adopt Serre's graph theory notation, see \cite{Ser03}: A \emph{graph} consists of a \emph{vertex set} $V$ and an \emph{edge set} $E$, together with a fixed-point-free involution of $E$ denoted by $e\mapsto\overline{e}$ and maps $o,t:E\to V$, providing the \emph{origin} and \emph{terminus} of an edge $e$ such that $o(\overline{e})=t(e)$ and $t(\overline{e})=o(e)$. For $x\in V$, let $E(x):=\{e\in E\mid o(e)=x\}$ be the set of edges issuing from $x$. Let $\Omega$ be a set of cardinality $d\in\bbN_{\ge 3}$ and $T_{d}=(V,E)$ the $d$-regular tree. A \emph{legal coloring} $l$ of $T_{d}$ is a map $l:E\to\Omega$ such that for every $x\in V$ the map $l_{x}:E(x)\to\Omega,\ y\mapsto l(y)$ is a bijection, and $l(e)=l(\overline{e})$ for all $e\in E$. Given $T\subseteq T_{d}$ and $H\le\Aut(T_{d})$, we let $H_{T}$ denote the fixator of $T$ in $H$.

\subsubsection*{Burger--Mozes Groups} The universal groups introduced by Burger--Mozes in \cite[Section 3.2]{BM00} provide an equally rich and manageable class of groups acting on trees. The map $\sigma:\Aut(T_{d})\times V\to\Sym(\Omega),\ (g,x)\mapsto l_{g x}\circ g\circ l_{x}^{-1}$ captures the local permutation $\sigma(g,x)$ of an automorphism $g\in\Aut(T_{d})$ at $x\in V$. To every permutation group $F\le\Sym(\Omega)$ we associate a universal group acting on $T_{d}$ locally like $F$.

\begin{definition}
Let $F\le\Sym(\Omega)$. Set $\mathrm{U}(F):=\{g\in\Aut(T_{d})\mid \forall x\in V:\ \sigma(g,x)\in F\}$.
\end{definition}

Passing to a different legal coloring amounts to passing to a conjugate of $\mathrm{U}(F)$ in $\Aut(T_{d})$ which justifies omitting explicit reference to the legal coloring. For example, $\mathrm{U}(\Sym(\Omega))=\Aut(T_{d})$ whereas $\mathrm{U}(\{\id\})$ is the discrete cocompact subgroup generated by the color-preserving inversions of the edges in $E(x)$ for a given $x\in V$.

\begin{remark}\label{rem:uf_elements}
Let $F\le\Sym(\Omega)$. Elements of $\mathrm{U}(F)$ are readily constructed: Given $v,w\in V(T_{d})$ and $\tau\in F$, define $g:B(v,1)\to B(w,1)$ by setting $g(v)=w$ and $\sigma(g,v)=\tau$. Given a collection or permutations $(\tau_{\omega})_{\omega\in\Omega}$ such that $\tau(\omega)=\tau_{\omega}(\omega)$ for all $\omega\in\Omega$ there is a unique extension of $g$ to $B(v,2)$ such that $\sigma(g,v_{\omega})=\tau_{\omega}$ where $v_{\omega}\in S(v,1)$ is the unique vertex with $l(v,v_{\omega})=\omega$. Then proceed iteratively.
\end{remark}

Recall that $\Aut(T_{d})$ is a totally disconnected locally compact group with the permutation topology for its action on $V$. The following collection of properties is implicit in \cite[Section 3.2]{BM00} and elaborated in \cite[Proposition 4.6]{GGT16}.

\begin{proposition}[{\cite[Section 3.2]{BM00}}]\label{prop:uf_properties}
Let $F\le\Sym(\Omega)$. Then the group $\mathrm{U}(F)$ is
\begin{itemize}
 \item[(i)] closed in $\mathrm{Aut}(T_{d})$,
 \item[(ii)] locally permutation isomorphic to $F$,
 \item[(iii)] vertex-transitive,
 \item[(iv)] edge-transitive if and only if $F$ is transitive, and
 \item[(v)] discrete in $\Aut(T_{d})$ if and only if $F$ is semiregular.
\end{itemize}
\end{proposition}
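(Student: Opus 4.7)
The plan is to prove the five items in order, using two central inputs: first, the observation that for fixed $x\in V$ the local permutation $\sigma(g,x)$ depends only on $g|_{E(x)}$, so $\sigma(\cdot,x)\colon\Aut(T_d)\to\Sym(\Omega)$ is locally constant (hence continuous, with $\Sym(\Omega)$ carrying the discrete topology); and second, the explicit iterative construction of elements of $\mathrm{U}(F)$ from Remark~\ref{rem:uf_elements}.

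For (i), I would write $\mathrm{U}(F)=\bigcap_{x\in V}\sigma(\cdot,x)^{-1}(F)$; since $F\subseteq\Sym(\Omega)$ is finite, each factor is clopen and the intersection is closed. For (ii), local permutation isomorphism amounts to $\sigma(\mathrm{U}(F)_x,x)=F$ for every $x\in V$. The inclusion $\subseteq$ is built into the definition. For $\supseteq$, given $\tau\in F$, I would apply Remark~\ref{rem:uf_elements} with $v=w=x$ and prescribed $\sigma(g,x)=\tau$, compatible choices $\tau_\omega\in F$, and further local actions in $F$ at every subsequent level, producing $g\in\mathrm{U}(F)_x$ with $\sigma(g,x)=\tau$.

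For (iii), the subgroup $\mathrm{U}(\{\id\})\le\mathrm{U}(F)$ already acts transitively on $V$: given adjacent $v,w$, Remark~\ref{rem:uf_elements} with all prescribed local permutations equal to $\id$ produces a color-preserving automorphism sending $v$ to $w$, and composition along a path handles arbitrary pairs. Item (iv) is then immediate from (ii) and (iii): transitivity of $F$ on $\Omega$ corresponds via (ii) to transitivity of $\mathrm{U}(F)_v$ on $E(v)$, which combined with vertex-transitivity yields edge-transitivity; conversely, an edge-transitive $\mathrm{U}(F)$ forces $\mathrm{U}(F)_v$ to act transitively on $E(v)$, hence $F$ on $\Omega$.

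For (v), if $F$ is semiregular and $g\in\mathrm{U}(F)$ fixes an edge $(v,w)$ of color $\omega$, then $\sigma(g,v)\in F_\omega=\{\id\}$, so $g$ fixes $B(v,1)$; induction outward forces $g=\id$, making the open edge-fixators trivial and $\mathrm{U}(F)$ discrete. Conversely, given $\tau\in F_\omega\smallsetminus\{\id\}$ and $n\in\bbN$, I would choose $x\in S(v,n)$ whose edge back toward $v$ has color $\omega$, then use Remark~\ref{rem:uf_elements} to build $g\in\mathrm{U}(F)$ with $\sigma(g,y)=\id$ on $B(v,n-1)$, $\sigma(g,x)=\tau$, and identity beyond; the resulting $g$ fixes $B(v,n)$ pointwise yet is nontrivial, witnessing non-discreteness. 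The only real obstacle is the bookkeeping of the iterative construction for (v); no single item requires a genuinely nonobvious idea.
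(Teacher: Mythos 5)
The paper offers no proof of this proposition; it is quoted from \cite[Section 3.2]{BM00} with a pointer to \cite[Proposition 4.6]{GGT16}, so there is no internal argument to compare against. Your proofs of (i)--(iv) and of the forward direction of (v) are the standard ones and are correct: closedness from local constancy of $\sigma(\cdot,x)$, surjectivity of the local action and vertex-transitivity from the iterative construction of Remark~\ref{rem:uf_elements}, (iv) from (ii) and (iii), and triviality of pointwise edge-stabilizers by propagating $\sigma(g,\cdot)=\id$ outward when $F$ is semiregular.

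There is, however, one step that fails as literally written: in the converse of (v) you prescribe $\sigma(g,x)=\tau\in F_{\omega}\smallsetminus\{\id\}$ at a single vertex $x\in S(v,n)$ and ``identity beyond''. This violates the compatibility constraint built into Remark~\ref{rem:uf_elements}: if $y$ is the neighbour of $x$ in direction $\lambda$, the local permutations at the two endpoints of the edge $\{x,y\}$ must agree on its colour, so $\sigma(g,y)(\lambda)=\sigma(g,x)(\lambda)=\tau(\lambda)$. Since $\tau\neq\id$ there is some $\lambda\neq\omega$ with $\tau(\lambda)\neq\lambda$, and for the corresponding $y$ the choice $\sigma(g,y)=\id$ is impossible; the prescribed data does not define an automorphism. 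The repair is immediate and standard: propagate compatible permutations outward, e.g.\ set $\sigma(g,u)=\tau$ for every $u$ lying in the half-trees hanging off $x$ away from $v$ (the compatibility condition then reads $\tau(\lambda)=\tau(\lambda)$ at every subsequent edge) and $\sigma(g,u)=\id$ at all other vertices. Because $\tau$ fixes $\omega$, the colour of the edge from $x$ back toward $v$, the resulting $g$ still fixes $B(v,n)$ pointwise, and it is nontrivial because $\tau$ moves some neighbour of $x$ at distance $n+1$. With this correction the non-discreteness argument, and hence the whole proposition, goes through.
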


As a consequence of the above, $\mathrm{U}(F)$ is a (compactly generated) totally disconnected locally compact group in its own right for the subspace topology of $\Aut(T_{d})$. For future reference, we also state the following, see \cite[Section 4.1]{GGT16}.

\begin{proposition}\label{prop:uf_tits}
Let $F\le\Sym(\Omega)$. Then $\mathrm{U}(F)$ satisfies Tits' Independence Property.
\end{proposition}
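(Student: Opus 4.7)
The plan is to verify the property directly from the local-action definition. A standard formulation reads: for every edge $e=\{v_{1},v_{2}\}$ of $T_{d}$, writing $T_{i}$ for the half-tree of $T_{d}$ containing $v_{i}$ obtained by removing $e$, every element of $\mathrm{U}(F)$ fixing $v_{1}$ and $v_{2}$ factors uniquely as a product $g=g_{1}g_{2}$ with $g_{i}\in\mathrm{U}(F)$ acting as $g$ on $T_{i}$ and as the identity on $T_{j}$ for $j\neq i$. The general version for an arbitrary finite subtree follows by iterating this decomposition branch by branch.

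Given such $g$, I would first note that because $g$ fixes both endpoints of $e$ it stabilizes each $T_{i}$ setwise, so the cut-and-paste maps $g_{1},g_{2}\in\Aut(T_{d})$ are well-defined, have disjoint supports, commute, and satisfy $g=g_{1}g_{2}$. The only substantive step is to check that $g_{i}\in\mathrm{U}(F)$ by inspecting $\sigma(g_{i},x)$ at every vertex $x$. For $x\in T_{i}\setminus\{v_{i}\}$ one has $\sigma(g_{i},x)=\sigma(g,x)\in F$; for $x\in T_{j}$ with $j\neq i$ one has $\sigma(g_{i},x)=\id\in F$. The nontrivial case is $x=v_{i}$: writing $\omega_{0}=l(e)$, the permutation $\sigma(g_{i},v_{i})$ fixes $\omega_{0}$ (by construction $g_{i}$ fixes $e$) and agrees with $\sigma(g,v_{i})$ on $\Omega\setminus\{\omega_{0}\}$. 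Since $g$ fixes $e$ globally, $\sigma(g,v_{i})\in F$ already fixes $\omega_{0}$, whence $\sigma(g_{i},v_{i})=\sigma(g,v_{i})\in F$.

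With each $g_{i}$ shown to lie in $\mathrm{U}(F)$ and the supports of $g_{1},g_{2}$ disjoint, the factorization $g=g_{1}g_{2}$ is a direct-product decomposition. The only subtle point is the local-action verification at $v_{i}$, which rests on the automatic observation that $\sigma(g,v_{i})$ fixes the color of the globally fixed edge $e$; modifying $g$ outside $T_{i}$ therefore does not disturb its local action at $v_{i}$. Everything else is bookkeeping, and the argument transfers verbatim to the subtree version by applying it to each complementary branch simultaneously.
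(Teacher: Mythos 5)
Your argument is correct. Note that the paper does not prove this proposition at all -- it simply cites \cite[Section 4.1]{GGT16} -- and your direct verification (cut $g$ along the edge $e$, then check that the local permutation of each piece at every vertex is either $\sigma(g,x)$, the identity, or, at the cut vertices $v_i$, coincides with $\sigma(g,v_i)$ because $g$ already fixes the colour $l(e)$) is exactly the standard argument given in that reference. The extension to pointwise stabilizers of longer paths or finite subtrees goes through in the same way, as you indicate.
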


\subsubsection*{Le Boudec Groups} In \cite{Bou16}, Le Boudec introduces groups acting on $T_{d}$ locally like $F\le\Sym(\Omega)$ \emph{almost} everywhere. The precise definition reads as follows.

\begin{definition}
Let $F\le\Sym(\Omega)$. Set
\begin{displaymath}
 \mathrm{G}(F):=\{g\in\Aut(T_{d})\mid \text{the set $\{x\in V\mid \sigma(g,x)\not\in F\}$ is finite}\}.
\end{displaymath}
\end{definition}

Notice that $\mathrm{U}(F)$ is a subgroup of $\mathrm{G}(F)$. We equip $\mathrm{G}(F)$ with the unique group topology making the inclusion $\mathrm{U}(F)\rightarrowtail\mathrm{G}(F)$ continous and open. It exists essentially due to the fact that $\mathrm{G}(F)$ commensurates a compact open subgroup of $\mathrm{U}(F)$, see \cite[Lemma 3.2]{Bou16}. We state explicitly that this topology differs from the subspace topology of $\Aut(T_{d})$, see e.g. Proposition \ref{prop:gf_res_disc} below. However, it entails that $\mathrm{G}(F)$ is locally isomorphic to $\mathrm{U}(F)$.

Given $g\in\mathrm{G}(F)$, a vertex $v\in V$ with $\sigma(g,v)\not\in F$ is a \emph{singularity}. The local action at singularities is restricted as follows.

\begin{lemma}[{\cite[Lemma 3.3]{Bou16}}]
Let $F\le\Sym(\Omega)$ and $g\in\mathrm{G}(F)$ with a singularity $v\in V$. Then $\sigma(g,v)$ preserves the partition $F\backslash\Omega$ of $\Omega$ into $F$-orbits setwise.
\end{lemma}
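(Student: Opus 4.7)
My approach is to prove the contrapositive: if $\tau:=\sigma(g,v)$ does not preserve the partition $F\backslash\Omega$ setwise, then $g$ has infinitely many singularities, contradicting $g\in\mathrm{G}(F)$. Since $\tau\notin\widehat{F}$, at least one $F$-orbit is not fixed setwise by $\tau$, so there is $\omega_{0}\in\Omega$ with $\tau(\omega_{0})\notin F\omega_{0}$.

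The core of the argument is the following combinatorial claim, which I would establish first: for any $\pi\in\Sym(\Omega)$, if $\pi(\omega)\notin F\omega$ for some $\omega\in\Omega$, then there exists $\omega'\neq\omega$ with $\pi(\omega')\notin F\omega'$. This should follow by a short bijectivity argument: if $\pi$ preserved every $F$-orbit other than $F\omega$ setwise and mapped $F\omega\setminus\{\omega\}$ into $F\omega$, then $\pi$ would restrict to a bijection of $\Omega\setminus F\omega$, forcing the image $\pi(\omega)\in\Omega\setminus F\omega$ to have two distinct preimages, contradicting injectivity.

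Using this claim, I would inductively construct a non-backtracking ray $v=x_{0},x_{1},x_{2},\dots$ in $T_{d}$ together with colours $\omega_{0},\omega_{1},\dots\in\Omega$ such that $x_{i+1}$ is the neighbour of $x_{i}$ along the edge of colour $\omega_{i}$ and $\sigma(g,x_{i})(\omega_{i})\notin F\omega_{i}$ for every $i\geq 0$. The inductive step rests on the symmetry of the legal coloring: the edge from $x_{i+1}$ back to $x_{i}$ again has colour $\omega_{i}$, so $\sigma(g,x_{i+1})(\omega_{i})=\sigma(g,x_{i})(\omega_{i})\notin F\omega_{i}$. Applying the combinatorial claim to $\sigma(g,x_{i+1})$ with the distinguished colour $\omega_{i}$ produces some $\omega_{i+1}\neq\omega_{i}$ with $\sigma(g,x_{i+1})(\omega_{i+1})\notin F\omega_{i+1}$, and the requirement $\omega_{i+1}\neq\omega_{i}$ is precisely what guarantees non-backtracking at the next step.

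Finally, because no element of $F$ can send a colour outside its own $F$-orbit, the condition $\sigma(g,x_{i})(\omega_{i})\notin F\omega_{i}$ forces $\sigma(g,x_{i})\notin F$, so every $x_{i}$ with $i\geq 1$ is a singularity of $g$. This produces infinitely many singularities, contradicting $g\in\mathrm{G}(F)$, and so $\tau\in\widehat{F}$. The main obstacle is the auxiliary bijectivity claim about $\Sym(\Omega)$ in the second paragraph; once it is in place, the cascade of singularities along the ray is immediate and the lemma follows.
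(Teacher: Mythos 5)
Your argument is correct. Note that the paper itself gives no proof of this lemma -- it is quoted verbatim from \cite[Lemma 3.3]{Bou16} -- so there is nothing internal to compare against; your write-up is in fact essentially Le Boudec's original argument. The two ingredients both check out: the auxiliary claim holds because an injective self-map of the finite set $\Omega\setminus F\omega$ is automatically surjective, so $\pi(\omega)\in\Omega\setminus F\omega$ would acquire a second preimage; and the propagation step $\sigma(g,x_{i+1})(\omega_{i})=\sigma(g,x_{i})(\omega_{i})$ follows directly from the definition $\sigma(g,x)=l_{gx}\circ g\circ l_{x}^{-1}$ together with the requirement $l(e)=l(\overline{e})$ in the definition of a legal colouring, both of which the paper records in Section \ref{sec:introduction}. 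The condition $\omega_{i+1}\neq\omega_{i}$ supplied by the claim is exactly what makes the ray non-backtracking, hence the vertices $x_{i}$ pairwise distinct, and each is a singularity since an element of $F$ cannot move a colour out of its own $F$-orbit; this contradicts the finiteness of the singular set for $g\in\mathrm{G}(F)$. No gaps.
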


For $F\le\Sym(\Omega)$, the maximal subgroup of $\Sym(\Omega)$ which preserves the partition $F\backslash\Omega=\bigsqcup_{i\in I}\Omega_{i}$ setwise is $\smash{\widehat{F}}:=\prod_{i\in I}\Sym(\Omega_{i})$.

\begin{definition}
Let $\smash{F\le F'\le\widehat{F}\le\Sym(\Omega)}$. Set $\mathrm{G}(F,F'):=\mathrm{G}(F)\cap\mathrm{U}(F')$.
\end{definition}

We remark that $\mathrm{G}(F,F)=\mathrm{U}(F)$ and $\smash{\mathrm{G}(F,\widehat{F})=\mathrm{G}(F)}$.

\begin{proposition}\label{prop:gf_res_disc}
Let $F\smash{\lneq F'\le\widehat{F}\le\Sym(\Omega)}$ and $b\in V(T_{d})$. Then $\mathrm{G}(F,F')_{b}$ is non-compact and residually discrete.
\end{proposition}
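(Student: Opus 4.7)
The plan is to prove non-compactness and residual discreteness separately. For residual discreteness I expect a straightforward filtration argument; the harder part is non-compactness, which I would establish by building elements of $\mathrm{G}(F,F')_b$ with isolated singularities at arbitrary distance from $b$ and showing they lie in distinct cosets of the compact open subgroup $\mathrm{U}(F)_b$.

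Concretely, fix $\tau\in F'\setminus F$, which is nonempty since $F\lneq F'$. For each $n\ge 1$ pick a vertex $v_n$ at distance $n$ from $b$, let $b=x_0,x_1,\ldots,x_n=v_n$ denote the geodesic, and let $c_i$ be the color of the edge $x_{i-1}x_i$. I would construct $g_n\in\mathrm{G}(F,F')_b$ with $\sigma(g_n,v_n)=\tau$ and $\sigma(g_n,x)\in F$ for all $x\ne v_n$ by backwards induction along the geodesic, choosing $\sigma(g_n,x_{i-1})\in F$ to satisfy the back-edge compatibility $\sigma(g_n,x_{i-1})(c_i)=\sigma(g_n,x_i)(c_i)$, and then extending $g_n$ iteratively over the rest of $T_d$ as in Remark \ref{rem:uf_elements}, keeping all remaining local permutations in $F$.

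The main obstacle is ensuring that each back-edge constraint admits a solution in $F$; this is precisely where the hypothesis $F'\le\widehat{F}$ enters. Any $\rho\in\widehat{F}$ preserves each $F$-orbit setwise, so for any color $c$ the elements $c$ and $\rho(c)$ lie in the same $F$-orbit, which makes the required constraint solvable within $F$. Having arranged $S(g_n):=\{x\in V:\sigma(g_n,x)\notin F\}=\{v_n\}$, the $g_n$ represent distinct cosets of $\mathrm{U}(F)_b$ in $\mathrm{G}(F,F')_b$: from $\sigma(fg,x)=\sigma(f,g(x))\sigma(g,x)$ one sees that left multiplication by an element of $\mathrm{U}(F)$ preserves the singularity set $S(\cdot)$, and the singletons $\{v_n\}$ are pairwise distinct. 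Since $\mathrm{U}(F)_b$ is compact and open, the existence of infinitely many cosets gives non-compactness.

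For residual discreteness, set $N_n:=\mathrm{G}(F,F')_{B(b,n)}$: each $N_n$ is normal in $\mathrm{G}(F,F')_b$ (any such element preserves $B(b,n)$ setwise and lies in $\mathrm{G}(F,F')$), open in the topology of $\mathrm{G}(F,F')$ because it contains the open subgroup $\mathrm{U}(F)_{B(b,n)}$, and $\bigcap_n N_n$ is trivial since any non-identity automorphism of $T_d$ moves some vertex.
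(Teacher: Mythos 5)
Your proof is correct and takes essentially the same approach as the paper: the residual discreteness argument via the open normal subgroups $\mathrm{G}(F,F')_{B(b,n)}$ is identical, and your non-compactness argument is an explicit version of the paper's, which writes $\mathrm{G}(F,F')_{b}$ as a \emph{strictly} increasing union of the open sets of elements whose singularities lie in $B(b,n)$. Your construction of the $g_{n}$ (using $F'\le\widehat{F}$ to solve the back-edge constraints within $F$) supplies the detail behind ``strictly increasing'' that the paper leaves implicit, and your coset-counting packaging of non-compactness is equivalent.
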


\begin{proof}
The vertex stabilizer $\mathrm{G}(F,F')_{b}$ can be written as the (strictly) increasing union $\mathrm{G}(F,F')_{b}=\bigcup_{n\in\bbN} K_{n}$ of the open sets $K_{n}$, consisting of the elements of $\mathrm{G}(F,F')_{b}$ whose singularities are contained in $B(b,n)$. Hence it is non-compact.

As to residual discreteness, an identity neighbourhood basis of $\mathrm{G}(F,F')_{b}$ consisting of open normal subgroups is given by the collection $\big(\mathrm{G}(F,F')_{B(b,n)}\big)_{n\in\bbN}$.
\end{proof}

Le Boudec groups enjoy many interesting properties, see \cite[Introduction]{Bou16}.

\subsubsection*{Lederle Groups} As before, we consider the $d$-regular tree $T_{d}=(V,E)$ equipped with a legal coloring and a base vertex $b\in V$. Further, let $F\le\Sym(\Omega)$. In \cite{Led17}, Lederle introduces an intriguing, locally isomorphic version of $\mathrm{U}(F)$ resembling Neretin's group \cite{Ner03} and thereby generalizes Neretin's construction.

Towards a precise definition, we recall the following from \cite[Section 3.2]{Led17}. A finite subtree $T\subseteq T_{d}$ is \emph{complete} if it contains $b$ and all its non-leaf vertices have valency $d$. We denote the set of leaves of $T$ by $L(T)\subseteq V(T_{d})$. Given a leaf $v\in L(T)$, let $T_{v}$ denote the subtree of $T_{d}$ spanned by $v$ and those vertices outside $T$ whose closest vertex in $T$ is $v$. Then define $T_{d}\backslash T:=\bigsqcup_{v\in L(T)}T_{v}$, a forest of $|L(T)|$ trees.

Let $H\!\le\!\Aut(T_{d})$. Given finite complete subtrees $T,T'\subseteq T_{d}$ with $|L(T)|\!=\!|L(T')|$, a forest isomorphism $\varphi:T_{d}\backslash T\to T_{d}\backslash T'$ such that for every $v\in L(T)$ there is $h_{v}\!\in\! H$ with $\varphi|_{T_{v}}=h_{v}|_{T_{v}}$ is an \emph{$H$-honest almost automorphism of $T_{d}$}. Two $H$-honest almost automorphisms of $T_{d}$ given by $\varphi:T_{d}\backslash T_{1}\to T_{d}\backslash T_{1}'$ and $\psi:T_{d}\backslash T_{2}\to T_{d}\backslash T_{2}'$ are \emph{equivalent} if there exists a finite complete subtree $T\supseteq T_{1}\cup T_{2}$ with $\varphi|_{T_{d}\backslash T}=\psi|_{T_{d}\backslash T}$. Notice that for any finite complete subtree $T\supseteq T_{1}$ there is a unique finite complete subtree $T'\supseteq T_{1}'$ and representative $\varphi':T_{d}\backslash T\to T_{d}\backslash T'$ of $\varphi$; analogously for $T_{1}'$. Hence we may pick a finite complete subtree $T\supseteq T_{1}'\cup T_{2}$ and representatives of $\varphi$ and $\psi$ with codomain and domain equal to $T_{d}\backslash T$ respectively, thus allowing for a composition of equivalence classes of $H$-honest almost automorphisms. Lederle's coloured Neretin groups (original notation $\calF(\mathrm{U}(F))$) can now be defined as follows.

\begin{definition}
Let $F\le\Sym(\Omega)$. Set
\begin{displaymath}
 \mathrm{N}(F):=\{[\varphi]\mid\varphi \text{ is a $\mathrm{U}(F)$-honest almost autormorphism of $T_{d}$}\}.
\end{displaymath}

\end{definition}

Observe that $\mathrm{N}(F)\cap\Aut(T_{d})=\mathrm{G}(F)$. As before, there exists a unique group topology on $\mathrm{N}(F)$ such that the inclusion $\mathrm{U}(F)\rightarrowtail\mathrm{N}(F)$ is open and continuous. This is essentially due to the fact that $\mathrm{N}(F)$ commensurates a compact open subgroup of $\mathrm{U}(F)$, see \cite[Proposition 2.24]{Led17}. Overall, given $F\le\Sym(\Omega)$ we have the following continuous and open injections:
\begin{displaymath}
 \xymatrix{
  \mathrm{U}(F) \ar@{ >->}[r] & \mathrm{G}(F) \ar@{ >->}[r] & \mathrm{N}(F).
 }
\end{displaymath}

\section{Local Sylow Subgroups} This section is concerned with determining local Sylow subgroups of the Burger--Mozes type groups. Throughout, $\Omega$ denotes a set of cardinality $d\in\bbN_{\ge 3}$ and $p$ is a prime. We consider the $d$-regular tree $T_{d}=(V,E)$ with a fixed legal coloring and base vertex $b\in V$. Furthermore, $T$ denotes a finite subtree of $T_{d}$.

\vspace{0.2cm}
Note that it suffices to consider $\mathrm{U}(F)$: Any local Sylow subgroup of $\mathrm{U}(F)$ is also a local Sylow subgroup of $\mathrm{G}(F,F')$ and $\mathrm{N}(F)$ by definition of the topologies.

In a sense, the following proposition provides local $p$-Sylow subgroups of $\mathrm{U}(F)$ in the case where the operations of taking a $p$-Sylow subgroup and taking point stabilizers commute for $F$. It is the basis of all subsequent statements about the $p$-localization of Burger--Mozes type groups and amends \cite[Lemma 4.2]{Rei13}.

\begin{proposition}\label{prop:local_sylow_uf}
Let $F\le\Sym(\Omega)$ and $F(p)\le F$ a $p$-Sylow subgroup. Then $\mathrm{U}(F(p))_{T}$ is a $p$-Sylow subgroup of $\mathrm{U}(F)_{T}$ if and only if so is $F(p)_{\omega}\le F_{\omega}$ for all $\omega\in\Omega$.
\end{proposition}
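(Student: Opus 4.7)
The plan is to use the standard characterization of $p$-Sylow subgroups of profinite groups: a closed subgroup $P$ of a profinite group $G$ is a $p$-Sylow if and only if $P$ is pro-$p$ and the supernatural index $[G:P]$ is coprime to $p$, equivalently if and only if the image of $P$ in every finite quotient of $G$ is a Sylow $p$-subgroup. I would apply this with $G = \mathrm{U}(F)_{T}$, $P = \mathrm{U}(F(p))_{T}$, and the open normal basis $\{\mathrm{U}(F)_{B(T, n)}\}_{n \ge 0}$ of $\mathrm{U}(F)_{T}$ at the identity.

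First I would verify that $\mathrm{U}(F(p))_{T}$ is a closed pro-$p$ subgroup of $\mathrm{U}(F)_{T}$. Closure is inherited from Proposition~\ref{prop:uf_properties}(i). For the pro-$p$ property, each finite quotient $\mathrm{U}(F(p))_{T} / \mathrm{U}(F(p))_{B(T, n)}$ embeds, via the level-by-level extension mechanism of Remark~\ref{rem:uf_elements}, into an iterated wreath product of copies of $F(p)$ and is therefore a $p$-group.

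The main technical step is to compute the orders of the finite quotients $\mathrm{U}(F)_{T} / \mathrm{U}(F)_{B(T, n)}$ and $\mathrm{U}(F(p))_{T} / \mathrm{U}(F(p))_{B(T, n)}$ by building elements vertex by vertex via Remark~\ref{rem:uf_elements}: at an interior vertex $u$ of $B(T, n) \setminus T$ with back-edge color $c_u$, the number of admissible local actions is $|F_{c_u}|$ in the $F$-case and $|F(p)_{c_u}|$ in the $F(p)$-case. The relative index therefore factorises as a product of local terms of the form $|F_\omega|/|F(p)_\omega|$ indexed by the back-edge colors of vertices outside $T$, and this product is coprime to $p$ precisely when each factor is. Since every color $\omega \in \Omega$ arises as a back-edge color $c_u$ at some vertex of $B(T, n) \setminus T$ for $n$ large enough, the supernatural index $[\mathrm{U}(F)_{T} : \mathrm{U}(F(p))_{T}]$ is coprime to $p$ if and only if $F(p)_\omega$ is a $p$-Sylow of $F_\omega$ for every $\omega \in \Omega$, yielding both directions of the equivalence. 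The main obstacle I anticipate is the careful vertex-by-vertex enumeration required to justify the clean factorisation of the quotient orders, in particular keeping track of the initial contributions attached to vertices of $\partial T$ alongside those of the deeper interior levels.
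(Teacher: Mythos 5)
Your argument is essentially the paper's own proof: the paper likewise computes the orders of the restrictions of $\mathrm{U}(F)_{T}$ and $\mathrm{U}(F(p))_{T}$ to growing balls via the level-by-level extension mechanism of Remark~\ref{rem:uf_elements}, obtaining the recursion $\left\vert\mathrm{U}(F)_{b}|_{S(b,k+1)}\right\vert=\left\vert\mathrm{U}(F)_{b}|_{S(b,k)}\right\vert\cdot\prod_{w\in P_{k}}|F_{\omega_{k}}|$ and comparing the maximal powers of $p$ dividing the two orders, so your supernatural-index and finite-quotient packaging is only a more formal wrapper around the same count. The one obstacle you flag --- the initial contributions at vertices of $T$ itself, which at a vertex meeting two or more edges of $T$ are indices of multi-point colour stabilizers $[F_{C}:F(p)_{C}]$ rather than of the single-point stabilizers appearing in the hypothesis --- is exactly the point the paper compresses into its final sentence, so your attempt is faithful to, and no less complete than, the published argument.
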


\begin{proof}
First, assume that $T$ consists of a single vertex $b\in V$. The sphere $S(b,k)\subset V$ of radius $k$ around $b\in V$ is, via the given legal coloring, in natural bijection with
\begin{displaymath}
 P_{k}:=\{w=(\omega_{1},\ldots,\omega_{k})\in\Omega^{k}\mid \forall i\in\{1,\ldots,k-1\}:\ \omega_{i+1}\neq \omega_{i}\}.
\end{displaymath}
The restriction of $\mathrm{U}(F)$ to $S(b,k)$ yields a subgroup of $\Sym(S(b,k))$ of cardinality given by $\left\vert\mathrm{U}(F)_{b}|_{S(b,1)}\right\vert=|F|$ and $\left\vert\mathrm{U}(F)_{b}|_{S(b,k+1)}\right\vert=\left\vert\mathrm{U}(F)_{b}|_{S(b,k)}\right\vert\cdot\prod_{w\in P_{k}}|F_{\omega_{k}}|$. The maximal powers of $p$ dividing $\left\vert\mathrm{U}(F)_{b}|_{S(b,k)}\right\vert$ and $\left\vert\mathrm{U}(F(p))_{b}|_{S(b,k)}\right\vert$ are hence equal for all $k\in\bbN_{0}$ if and only if $F(p)_{\omega}\le F_{\omega}$ is a $p$-Sylow subgroup for all $\omega\in\Omega$.

Similarly, when $T$ is not a single vertex, the size of the restriction of $\mathrm{U}(F)_{T}$ to a sufficiently larger subtree is a product of the $|F_{\omega}|$ involving \emph{all} $\omega\in\Omega$.
\end{proof}

For transitive $F\le\Sym(\Omega)$, it suffices to check the above criterion for one choice of a $p$-Sylow subgroup $F(p)$ of $F$ and all $\omega\in\Omega$. We now identify classes of permutation group and values of $p$ to which Proposition \ref{prop:local_sylow_uf} applies. For the symmetric and alternating groups we have the following, complete description.

\begin{proposition}
Let $F=\Sym(\Omega)$ or $F=\mathrm{Alt}(\Omega)$ and $F(p)\le F$ a $p$-Sylow subgroup. Further, let $p^{s}$ ($s\in\bbN_{0}$) be the maximal power of $p$ dividing $d$. Then $F(p)_{\omega}\le F_{\omega}$ is a $p$-Sylow subgroup for all $\omega\in\Omega$ if and only if either
\begin{itemize}
 \item[(i)] $p>d$, or
 \item[(ii)] $s\ge 1$ and $p^{s+1}>d$, or
 \item[(iii)] $F=\mathrm{Alt}(\Omega)$ and $(d,p)=(3,2)$.
\end{itemize}
\end{proposition}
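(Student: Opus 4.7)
The plan is to first rephrase the criterion of Proposition \ref{prop:local_sylow_uf} as a statement about the orbits of $F(p)$ on $\Omega$ and then exploit the wreath product structure of $p$-Sylow subgroups of symmetric groups, handling $\mathrm{Alt}(\Omega)$ by comparison with $\Sym(\Omega)$. Since $F$ is transitive on $\Omega$ for $d\ge 3$, orbit-stabilizer applied to both $F$ and $F(p)$ yields $|F_{\omega}|_{p}=|F|_{p}/p^{s}=|F(p)|/p^{s}$ and $|F(p)_{\omega}|=|F(p)|/|F(p)\cdot\omega|$, where $|\cdot|_{p}$ denotes the $p$-part. As $|F(p)_{\omega}|$ is already a $p$-power, it is a $p$-Sylow subgroup of $F_{\omega}$ if and only if $|F(p)\cdot\omega|=p^{s}$; equivalently, every $F(p)$-orbit on $\Omega$ has size exactly $p^{s}$.

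For $F=\Sym(\Omega)$ the classical description identifies $F(p)$ with $\prod_{i\ge 0}P_{p^{i}}^{a_{i}}$, where $d=\sum_{i}a_{i}p^{i}$ is the base-$p$ expansion and $P_{p^{i}}$ acts transitively on $p^{i}$ points. Hence $F(p)$ has exactly $a_{i}$ orbits of size $p^{i}$, and these share the common size $p^{s}$ precisely when $d=a_{s}p^{s}$ with $a_{s}<p$, i.e.\ $d<p^{s+1}$; this yields $p>d$ when $s=0$ (case (i)) and $p^{s+1}>d$ when $s\ge 1$ (case (ii)). For $F=\mathrm{Alt}(\Omega)$ with odd $p$ the Sylow subgroups of $\mathrm{Alt}(\Omega)$ and $\Sym(\Omega)$ coincide, so the same characterisation carries over verbatim.

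The remaining case is $F=\mathrm{Alt}(\Omega)$ with $p=2$. Writing the $2$-Sylow of $\Sym(\Omega)$ as $P=\prod_{k}P_{2^{i_{k}}}$ indexed by the set bits in the binary expansion of $d$, the $2$-Sylow $Q=P\cap\mathrm{Alt}(\Omega)$ is the kernel of the total-sign homomorphism on $P$. If at least two indices $i_{k}$ satisfy $i_{k}\ge 1$, odd signs in one non-trivial factor can be cancelled by another, so $Q$ projects surjectively onto each factor and therefore shares the (distinct, hence non-uniform) orbit sizes of $P$. If only one $i_{k}$ satisfies $i_{k}\ge 1$, say $i_{k_{0}}$, then $Q$ restricts to that factor as $P_{2^{i_{k_{0}}}}\cap A_{2^{i_{k_{0}}}}$; using the recursion $P_{2^{s}}=P_{2^{s-1}}\wr C_{2}$ and the evenness of the swap generator for $s\ge 2$, an induction shows this intersection is trivial for $i_{k_{0}}=1$ and transitive for $i_{k_{0}}\ge 2$. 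Collecting the outcomes: $d=2^{s}$ with $s\ge 2$ gives a single orbit of size $2^{s}=p^{s}$ (case (ii)); $d=3$ trivialises $Q$ entirely, producing all singletons of size $p^{s}=1$ (case (iii)); and every other binary shape of $d$ forces non-uniform orbits.

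The main technical obstacle is the inductive transitivity of $P_{2^{s}}\cap A_{2^{s}}$ on $2^{s}$ points for $s\ge 2$, which is what sustains case (ii) for alternating groups at $p=2$; once this is in place, the rest is a careful accounting of base-$p$ digits and sign parities.
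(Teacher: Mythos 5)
Your proof is correct and follows essentially the same route as the paper: both reduce the Sylow condition on point stabilizers, via orbit--stabilizer, to the statement that every $F(p)$-orbit has size $p^{s}$, and then read off the orbit sizes from the wreath-product (Kaloujnine) structure of $F(p)$. Your treatment of $\mathrm{Alt}(\Omega)$ at $p=2$ (sign-kernel analysis, evenness of the block swap for $s\ge 2$) is in fact more detailed than the paper's, which disposes of that case in one sentence.
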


\begin{proof}
If $p>d$ then $F(p)$ is trivial and so is any $p$-Sylow subgroup of $F_{\omega}$. Now assume $p\le d$ and consider the following diagram of subgroups of $F$ and indices.
\begin{minipage}{0.39\linewidth}
\vspace{-0.15cm}
\begin{displaymath}
 \xymatrix@C=0.7cm@R=0.01cm{
    & F \ar@{-}[dl]_{d} \ar@{-}[dr]^{k} & \\
    F_{\omega} \ar@{-}[dr] & & F(p) \ar@{-}[dl]^{p^{r_{\omega}}} \\
    & F(p)_{\omega} &
 }
\end{displaymath} 
\end{minipage}
\hfill
\begin{minipage}{0.59\linewidth}
For every $\omega\in\Omega$ we have $[F:F_{\omega}]=|F\cdot\omega|=d$ and $[F(p):F(p)_{\omega}]=|F(p)\cdot\omega|=p^{r_{\omega}}$ for some $r_{\omega}\in\bbN_{0}$. Note that $p\nmid k$ by definition. Now examine the equation $d\cdot[F_{\omega}:F(p)_{\omega}]=k\cdot p^{r_{\omega}}$.
\end{minipage}
If $F(p)$ is trivial then $F=\mathrm{Alt}(\Omega)$ and $p$ is even, hence (iii). Now assume that $F(p)$ is non-trivial. Then there is $\omega\in\Omega$ such that $r_{\omega}\ge 1$. Thus, if $p\nmid d$, then $p\mid[F_{\omega}:F(p)_{\omega}]$ and hence $F(p)_{\omega}$ is not a $p$-Sylow subgroup of $F_{\omega}$. We conclude that the condition $s\ge 1$ is necessary. Note that the biggest $p^{r_{\omega}}$ ($\omega\in\Omega$) which occurs is given by the biggest power of $p$ which is smaller than or equal to $d$ due to the iterated wreath product structure of $F(p)$. As $p\nmid k$ we conclude (ii).

Conversely, suppose $s\ge 1$ and $p^{s+1}\ge d$. If $p$ is odd, or $F=\Sym(\Omega)$ and $p$ is even, then $F(p)$ is a direct product of $s$-fold iterated wreath products and the maximum power of $p$ dividing $[F(p):F(p)_{\omega}]$ and $[F:F_{\omega}]$ is $p^{s}$ in both cases. The same index assertions hold for $F=\mathrm{Alt}(\Omega)$ and $p$ even.
\end{proof}

For a general permutation group $F\le\Sym(\Omega)$ and $\omega\in\Omega$ we have
\begin{displaymath}
 |F(p)\cdot\omega|=\frac{|F(p)|}{|F(p)_{\omega}|}=\frac{|F(p)|\cdot[F_{\omega}:F(p)_{\omega}]}{|F_{\omega}|}=\frac{[F_{\omega}:F(p)_{\omega}]}{[F:F(p)]}\cdot |F\cdot\omega|.
\end{displaymath}
by the orbit-stabilizer theorem. In particular, we conclude the following.

\begin{proposition}\label{prop:local_sylow_orbits}
Let $F\le\Sym(\Omega)$ and $F(p)\le F$ a $p$-Sylow subgroup. Assume that $F\backslash\Omega=F(p)\backslash\Omega$. Then $F(p)_{\omega}\le F_{\omega}$ is a $p$-Sylow subgroup for all $\omega\in\Omega$. \qed
\end{proposition}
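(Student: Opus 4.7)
The plan is to read off the conclusion directly from the orbit--stabilizer identity displayed immediately before the proposition. There is essentially no hidden work; the statement is a one-line consequence of that identity together with the definition of a $p$-Sylow subgroup.

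First, I observe that the hypothesis $F\backslash\Omega=F(p)\backslash\Omega$ says precisely that $F$ and $F(p)$ have the same orbit partition on $\Omega$. In particular, for every $\omega\in\Omega$ we have $|F\cdot\omega|=|F(p)\cdot\omega|$, since $F(p)\cdot\omega\subseteq F\cdot\omega$ and these two orbits appear in the same partition block.

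Next, I plug this equality into the displayed identity
\[
 |F(p)\cdot\omega|=\frac{[F_{\omega}:F(p)_{\omega}]}{[F:F(p)]}\cdot |F\cdot\omega|,
\]
which rearranges to $[F_{\omega}:F(p)_{\omega}]=[F:F(p)]$. Since $F(p)$ is a $p$-Sylow subgroup of $F$, the right-hand side is coprime to $p$; hence $[F_{\omega}:F(p)_{\omega}]$ is coprime to $p$ as well.

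Finally, $F(p)_{\omega}$ is a subgroup of the $p$-group $F(p)$ and is therefore itself a $p$-group. A $p$-subgroup of $F_{\omega}$ whose index in $F_{\omega}$ is coprime to $p$ is a $p$-Sylow subgroup of $F_{\omega}$, so $F(p)_{\omega}\le F_{\omega}$ is a $p$-Sylow subgroup for every $\omega\in\Omega$. The only ``obstacle'' is simply recognizing that the displayed equation already encodes all the arithmetic one needs; no further case analysis or structural argument about $F$ is required.
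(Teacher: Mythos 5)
Your argument is exactly the one the paper intends: the proposition carries a \qed because it is meant to follow immediately from the displayed orbit--stabilizer identity, and you have correctly filled in the (short) missing steps -- equal orbit partitions force $|F(p)\cdot\omega|=|F\cdot\omega|$, hence $[F_{\omega}:F(p)_{\omega}]=[F:F(p)]$ is prime to $p$, and the $p$-group $F(p)_{\omega}$ is therefore Sylow in $F_{\omega}$. Correct and in line with the paper's approach.
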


\begin{proposition}
Let $|\Omega|=p^{n}$ and $F\le\Sym(\Omega)$ transitive. Also, let $F(p)\le F$ be a $p$-Sylow subgroup. Then so is $F(p)_{\omega}\le F_{\omega}$ for all $\omega\in\Omega$ and $F(p)$ is transitive.
\end{proposition}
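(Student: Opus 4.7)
The plan is a straightforward index comparison. Write $|F|=p^{a}m$ with $\gcd(p,m)=1$, so $|F(p)|=p^{a}$ since $F(p)$ is a $p$-Sylow subgroup of $F$. Because $F$ acts transitively on $\Omega$ with $|\Omega|=p^{n}$, orbit--stabilizer yields $[F:F_{\omega}]=p^{n}$, whence $|F_{\omega}|=p^{a-n}m$ and the maximal $p$-power dividing $|F_{\omega}|$ equals $p^{a-n}$.

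Next I would observe that $F(p)_{\omega}=F(p)\cap F_{\omega}$ is a $p$-subgroup of $F_{\omega}$, so $|F(p)_{\omega}|$ divides $p^{a-n}$. This forces
\[
 [F(p):F(p)_{\omega}]\;\ge\;\frac{p^{a}}{p^{a-n}}\;=\;p^{n}\;=\;|\Omega|.
\]
But this index equals the size of the $F(p)$-orbit of $\omega$, which is at most $|\Omega|$. Equality is therefore forced, proving simultaneously that $F(p)$ is transitive on $\Omega$ and that $|F(p)_{\omega}|=p^{a-n}$, i.e.\ that $F(p)_{\omega}\le F_{\omega}$ is a $p$-Sylow subgroup.

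Once transitivity of $F(p)$ is in hand, the Sylow assertion can equivalently be read off from Proposition \ref{prop:local_sylow_orbits}, since $F\backslash\Omega=\{\Omega\}=F(p)\backslash\Omega$; I mention this as a sanity check that the two claims of the proposition are consistent with the earlier framework. I do not anticipate any real obstacle: the crux is the displayed inequality, which is a one-line pigeonhole comparing the full $p$-part of $|F|$ living in $F(p)$ against the full $p$-part of $|F_{\omega}|$ bounding $F(p)_{\omega}$, and both conclusions of the proposition then drop out of the equality case.
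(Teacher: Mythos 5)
Your argument is correct and is essentially the paper's own: both rest on the orbit--stabilizer theorem and a comparison of $p$-parts, with the paper phrasing it via the displayed identity $|F(p)\cdot\omega|=\frac{[F_{\omega}:F(p)_{\omega}]}{[F:F(p)]}\cdot p^{n}$ and a divisibility argument, while you run the equivalent pigeonhole directly on $|F(p)|$ versus the $p$-part of $|F_{\omega}|$. Both conclusions drop out of the forced equality in the same way, so there is nothing to add.
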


\begin{proof}
In this case, the above equation reads
\begin{displaymath}
 |F(p)\cdot\omega|=\frac{[F_{\omega}:F(p)_{\omega}]}{[F:F(p)]}\cdot p^{n}.
\end{displaymath}
As always, $|F(p)\cdot\omega|$ is a power of $p$ and bounded by $|\Omega|=p^{n}$. Since $p$ does not divide $[F:F(p)]$ the above implies that $p$ does not divide $[F_{\omega}:F(p)_{\omega}]$.
\end{proof}

\section{Prime Localizations}\label{sec:localization}

This section is concerned with the $p$-localizations of Burger--Mozes type groups. Recall that for groups $H\le G$ one defines the \emph{commensurator of $H$ in $G$} by
\begin{displaymath}
 \mathrm{Comm}_{G}(H):=\{g\in G\mid [H:H\cap gHg^{-1}]<\infty \text{ and } [gHg^{-1}:gHg^{-1}\cap H]<\infty\}.
\end{displaymath}
The \emph{$p$-localization} of a totally disconnected locally compact group $G$ is defined as the commensurator $\mathrm{Comm}_{G}(S)$ of a local $p$-Sylow subgroup $S$ of $G$, equipped with the unique group topology that makes the inclusion of $S$ into $G_{(p)}:=\mathrm{Comm}_{G}(S)$ continuous and open. Then the inclusion $\mathrm{Comm}_{G}(S)\to G$ is continuous.

\vspace{0.2cm}
The following lemma due to Caprace--Monod \cite[Section 4]{CM11} and Caprace--Reid--Willis \cite[Corollary 7.4]{CRW13} is crucial for the subsequent statements of this section. See also \cite{Wes15}.

\begin{lemma}\label{lem:comm_normalizer}
Let $G$ be residually discrete, locally compact and totally disconnected. Further, let $K\le G$ be compact. Then $\mathrm{Comm}_{G}(K)=\bigcup_{L\le_{o} K}N_{G}(L)$.
\end{lemma}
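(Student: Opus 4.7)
The inclusion $\bigcup_{L\le_o K}N_G(L)\subseteq\mathrm{Comm}_G(K)$ is immediate: if $g\in N_G(L)$ with $L\le_o K$, then $L=gLg^{-1}\subseteq K\cap gKg^{-1}$ is a finite-index subgroup of both $K$ and $gKg^{-1}$, hence $g\in\mathrm{Comm}_G(K)$.

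For the converse, fix $g\in\mathrm{Comm}_G(K)$; the goal is a $g$-invariant $L\le_o K$. By commensurability, $K_0:=K\cap gKg^{-1}\cap g^{-1}Kg$ is open in $K$ with $g^{\pm 1}K_0g^{\mp 1}\subseteq K$. The crucial ingredient is the Caprace--Monod characterization of residually discrete TDLC groups \cite{CM11}: every identity neighborhood contains a compact open subgroup $V\le G$ that is normal in a compact open subgroup $W\le G$, so $N_G(V)\supseteq W$ is open in $G$. Applying this inside a $G$-open neighborhood of the identity refining $K_0$ yields such a pair $V\lhd W$ with $V\le K_0$.

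The desired $L$ arises as a finite $\langle g\rangle$-orbit intersection. Since $V\le_o K$, commensurability passes from $K$ to $V$, so $g\in\mathrm{Comm}_G(V)$. Combined with the openness of $N_G(V)$, this forces $g^n\in N_G(V)$ for some $n\ge 1$. Then $L:=\bigcap_{k=0}^{n-1}g^kVg^{-k}$ is a finite intersection of open subgroups of $G$, hence open in $K$, and $gLg^{-1}=L$ follows from $g^nVg^{-n}=V$.

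The main obstacle is extracting the integer $n$ with $g^n\in N_G(V)$. This step relies essentially on residual discreteness: without the openness of $N_G(V)$ afforded by Caprace--Monod, $\langle g\rangle$ could fail to meet $N_G(V)$ nontrivially, blocking the construction. The refinement of $V$ inside $K_0$ may have to be iterated to secure such an $n$, but the residual discreteness hypothesis is exactly what guarantees this process terminates.
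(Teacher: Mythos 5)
Your easy inclusion is fine and matches the paper. The converse, however, has a genuine gap, and it sits exactly where you flag it yourself. The version of Caprace--Monod you invoke --- every identity neighbourhood contains a compact open $V$ that is normal in some compact open $W$, whence $N_{G}(V)$ is open --- is true in \emph{every} totally disconnected locally compact group by van Dantzig's theorem (take $W=V$), so it carries no information and does not use residual discreteness at all. The actual theorem \cite[Corollary 4.1]{CM11} concerns \emph{compactly generated} residually discrete groups and produces compact open subgroups normal in the \emph{whole} group. Having only the contentless local statement, your pivotal deduction ``$g\in\mathrm{Comm}_{G}(V)$ together with openness of $N_{G}(V)$ forces $g^{n}\in N_{G}(V)$ for some $n\ge 1$'' is unsupported, and the two stated hypotheses genuinely do not suffice: in $G=\mathbb{Q}_{p}\rtimes\mathbb{Z}$ with the generator $g$ of $\mathbb{Z}$ acting by multiplication by $p$ and $V=\mathbb{Z}_{p}$, the normalizer $N_{G}(V)=\mathbb{Q}_{p}$ is open and $g$ commensurates $V$, yet $g^{n}Vg^{-n}=p^{n}\mathbb{Z}_{p}\neq V$ for every $n\neq 0$. (This $G$ is not residually discrete, but that is the point: your argument never actually brings residual discreteness to bear on the extraction of $n$, so nothing rules this behaviour out.) The closing remark that ``the refinement of $V$ may have to be iterated'' and that residual discreteness ``guarantees this process terminates'' is an acknowledgement of the missing step, not a proof of it. If you did have $g^{n}\in N_{G}(V)$, your finite intersection $L=\bigcap_{k=0}^{n-1}g^{k}Vg^{-k}$ would indeed be an open $g$-invariant subgroup of $K$; the problem is purely in producing $n$.

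The paper's route closes exactly this hole: given $g\in\mathrm{Comm}_{G}(K)$, form $H:=\langle K,g\rangle$. This is a compactly generated open subgroup, hence a compactly generated totally disconnected locally compact group inheriting residual discreteness, and \cite[Corollary 4.1]{CM11} applied to $H$ yields an identity neighbourhood basis of compact open subgroups normal in all of $H$ --- in particular normalized by $g$ itself, with no need to pass to a power or to iterate. You should restructure your converse around this auxiliary compactly generated group rather than around a single $V$ chosen inside $K_{0}$.
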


\begin{proof}
Every element of $G$ which normalizes an open subgroup of $K$ commensurates $K$ because open subgroups of $K$ have finite index in $K$ given that $K$ is compact.

Conversely, let $g\in\mathrm{Comm}_{G}(K)$ and consider $H:=\langle K,g\rangle$. Then $H$ is a compactly generated open subgroup of $\mathrm{Comm}_{G}(K)$ and hence a compactly generated, totally disconnected locally compact group in its own right. It inherits residual discreteness from $\mathrm{Comm}_{G}(K)$ which injects continuously into the residually discrete group $G$. By \cite[Corollary 4.1]{CM11}, $H$ has an identity neighbourhood basis consisting of compact open normal subgroups. Hence $g$ normalizes an open subgroup of $K$.
\end{proof}

Now, let $F\le F'\le\widehat{F}\le\Sym(\Omega)$. In the case of Proposition \ref{prop:local_sylow_uf}, the following proposition identifes certain subsets of the $p$-localization of $\mathrm{G}(F,F')$ and thereby expands \cite[Lemma 4.2]{Rei13} given that $\mathrm{U}(F)=\mathrm{G}(F,F)$. We establish the following notation: Given partitions $\calP:=(P_{i})_{i\in I}$ of $V$ and $\calH=(H_{j})_{j\in J}$ of $H\le\Sym(\Omega)$, let
\begin{displaymath}
 \Gamma_{\calP}(\calH):=\{g\in\Aut(T_{d})\mid\forall i\in I:\ \exists j\in J:\ \forall v\in P_{i}:\ \sigma(g,v)\in H_{j}\}
\end{displaymath}
denote the set of automorphisms of $T_{d}$ whose local permutations at the vertices of a given element of $\calP$ all come from the same element of $\calH$.

\begin{proposition}\label{prop:localization_uf}
Let $\smash{F\le F'\le\widehat{F}\le\Sym(\Omega)}$ and $F(p)\le F$ a $p$-Sylow subgroup such that $F(p)_{\omega}\le F_{\omega}$ is a $p$-Sylow subgroup for all $\omega\in\Omega$. Set $S:=\mathrm{U}(F(p))_{b}$. Then
\begin{align*}
  \mathrm{Comm}_{\mathrm{G}(F,F')}(S)&=\langle\mathrm{U}(\{\id\}),\mathrm{Comm}_{\mathrm{G}(F,F')_{b}}(S)\rangle \\
  &\ge\langle \mathrm{G}(F(p),F'),\{\Gamma_{V/L}(N_{F}(F(p))/F(p))\mid L\le S \text{ open}\}\rangle.
\end{align*}
\end{proposition}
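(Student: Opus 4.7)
My plan is to split the result into the stated equality and the subsequent containment. Both rest on the observation that $\mathrm{U}(\{\id\})\le\mathrm{U}(F(p))$ acts vertex-transitively and that any $u\in\mathrm{U}(\{\id\})$ conjugates $S=\mathrm{U}(F(p))_{b}$ to $\mathrm{U}(F(p))_{ub}$, another compact open subgroup of $\mathrm{U}(F(p))$; since any two compact open subgroups of a totally disconnected locally compact group are commensurable, $\mathrm{U}(\{\id\})\subseteq\mathrm{Comm}_{\mathrm{G}(F,F')}(S)$. Vertex-transitivity of $\mathrm{U}(\{\id\})$ then lets me factor every $g\in\mathrm{G}(F,F')$ as $g=uh$ with $u\in\mathrm{U}(\{\id\})$ taking $b$ to $g(b)$ and $h=u^{-1}g\in\mathrm{G}(F,F')_{b}$; since the commensuration conditions for $h$ are intrinsic, $g$ lies in $\mathrm{Comm}_{\mathrm{G}(F,F')}(S)$ if and only if $h$ lies in $\mathrm{Comm}_{\mathrm{G}(F,F')_{b}}(S)$, which establishes the equality.

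For the containment, $\mathrm{G}(F(p),F')\le\mathrm{G}(F(p))$ commensurates its compact open subgroup $\mathrm{U}(F(p))_{b}=S$ by \cite[Lemma 3.2]{Bou16}. For $g\in\Gamma_{V/L}(N_{F}(F(p))/F(p))$ with $L\le S$ open I first note that $\sigma(g,\cdot)\in N_{F}(F(p))\le F$ forces $g\in\mathrm{U}(F)\le\mathrm{G}(F,F')$, and use the equality to reduce to the case $g(b)=b$, since composing $g$ with an element of $\mathrm{U}(\{\id\})$ does not alter local actions and hence preserves the $\Gamma$-condition. Fixing $r$ with $\mathrm{U}(F(p))_{B(b,r)}\le L$ and setting $L':=\mathrm{U}(F(p))_{B(b,r)}$, the aim is to show $g$ normalizes $L'$ and then apply Lemma~\ref{lem:comm_normalizer} to the residually discrete group $\mathrm{G}(F,F')_{b}$ (Proposition~\ref{prop:gf_res_disc}).

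The key computation is the cocycle expansion, for $l\in L'$ and $w:=g^{-1}v$,
\MLine{\sigma(glg^{-1},v)=\bigl(\sigma(g,lw)\sigma(g,w)^{-1}\bigr)\bigl(\sigma(g,w)\sigma(l,w)\sigma(g,w)^{-1}\bigr).}
The first factor lies in $F(p)$ because $w$ and $lw$ lie in the same $L$-orbit (as $l\in L'\le L$), which by the $\Gamma$-condition places $\sigma(g,w)$ and $\sigma(g,lw)$ in the same $F(p)$-coset of $N_{F}(F(p))$; the second factor lies in $F(p)$ as a conjugate of $\sigma(l,w)\in F(p)$ by $\sigma(g,w)\in N_{F}(F(p))$, using $F(p)\triangleleft N_{F}(F(p))$. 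Combined with $g(B(b,r))=B(b,r)$ forcing $glg^{-1}$ to fix $B(b,r)$ pointwise, this yields $gL'g^{-1}\le L'$. I anticipate the main obstacle to be the reverse inclusion needed for true normalization: the plan is to run the symmetric cocycle analysis for $g^{-1}$, which by the same coset manipulation lies in $\Gamma_{V/gLg^{-1}}(N_{F}(F(p))/F(p))$, and to combine this with the Tits-independence decomposition of $L'$ along the subtrees hanging off $B(b,r)$ permuted by $g$ to upgrade the one-sided inclusion to genuine normalization.
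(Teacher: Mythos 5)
Your treatment of the equality and of the containment of $\mathrm{G}(F(p),F')$ matches the paper: the equality is obtained exactly as there, via vertex-transitivity of $\mathrm{U}(\{\id\})$ and commensurability of the compact open subgroups $\mathrm{U}(F(p))_{b}$ and $\mathrm{U}(F(p))_{ub}$ of $\mathrm{U}(F(p))$, and for $\mathrm{G}(F(p),F')$ you invoke \cite[Lemma 3.2]{Bou16}, which is precisely the alternative the paper itself offers to its primary argument via Lemma \ref{lem:comm_normalizer} and the normalizers of the subgroups $\mathrm{U}(F(p))_{B(b,n)}$. For the $\Gamma$-generators your cocycle identity is the same one the paper uses, and your two-factor rewriting is a genuine improvement in precision: it makes explicit that the first factor lands in $F(p)$ only because $F(p)$ is normal in $N_{F}(F(p))$, a point the paper leaves implicit.

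The one genuine gap is the final step, which you explicitly leave as a ``plan''. Having shown $gL'g^{-1}\le L'$ you still owe $[L':gL'g^{-1}]<\infty$, equivalently $[S:S\cap gSg^{-1}]<\infty$; the one-sided containment only yields $[gSg^{-1}:S\cap gSg^{-1}]<\infty$, and a closed isomorphic copy of a profinite group inside itself can have infinite index, so nothing is automatic here. (The paper's own proof stops at $gLg^{-1}\subseteq\mathrm{U}(F(p))$ and asserts commensuration, so you have correctly isolated where the remaining work lies; you have just not done it.) Your proposed fix is also not quite ready to run: you note that $g^{-1}$ satisfies the coset condition with respect to $gLg^{-1}$-orbits, but $gLg^{-1}$ is not known to be an \emph{open} subgroup of $S$ --- that is exactly the point at issue --- so this observation alone is circular. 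The way to close it is an induction on $d(b,v)$: for $l\in L'$ one shows simultaneously that $g^{-1}lg$ maps each vertex $v$ into its $L'$-orbit and that $\sigma(g^{-1}lg,v)\in F(p)$; the base case on $B(b,r)$ is immediate from normality, and the inductive step combines the constancy of $\sigma(g,\cdot)F(p)$ on $L'$-orbits with the extension procedure of Remark \ref{rem:uf_elements} to see that the image of a child of $v$ stays in the correct $L'$-orbit. This yields $g^{-1}L'g\le L'$, hence $gL'g^{-1}=L'$, and then either Lemma \ref{lem:comm_normalizer} or the definition of the commensurator finishes the proof. Until that induction is written out, the proposal does not establish membership of the $\Gamma$-generators in the commensurator.
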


\begin{proof}
By Proposition \ref{prop:local_sylow_uf}, the group $S$ is a local $p$-Sylow subgroup of $\mathrm{U}(F)$ and hence of $\mathrm{G}(F,F')$. We first show that $\mathrm{G}(F,F')_{(p)}$ contains $\mathrm{U}(\{\id\})$. Indeed, given $g\in\mathrm{U}(\{\id\})$ we have $gSg^{-1}=\mathrm{U}(F(p))_{g(b)}$. Thus $S\cap gSg^{-1}=\mathrm{U}(F(p))_{(b,g(b))}$ which has finite index in both $S=\mathrm{U}(F)_{b}$ and $gSg^{-1}=\mathrm{U}(F(p))_{g(b)}$ by the orbit-stabilizer theorem. Since $\mathrm{U}(\{\id\})$ acts vertex-transitively on $T_{d}$ we conclude
\begin{displaymath}
 \mathrm{Comm}_{\mathrm{G}(F,F')}(S)=\langle\mathrm{U}(\{\id\}),\mathrm{Comm}_{\mathrm{G}(F,F')_{b}}(S)\rangle.
\end{displaymath}
Now, the vertex stabilizer $\mathrm{G}(F,F')_{b}$ is residually discrete by Proposition \ref{prop:gf_res_disc}. Hence, by Lemma \ref{lem:comm_normalizer}, the commensurator $\mathrm{Comm}_{\mathrm{G}(F,F')_{b}}(S)$ is the union of the normalizers in $\mathrm{G}(F,F')_{b}$ of open subgroups of $S=\mathrm{U}(F(p))_{b}$. For example, we may consider $L_{n}:=\mathrm{U}(F(p))_{B(b,n)}\le_{o}S$ for every $n\in\bbN$. The normalizer of $L_{n}$ in $\mathrm{G}(F,F')_{b}$ contains those elements of $\mathrm{G}(F(p),F')_{b}$ all of whose singularities are contained in $B(b,n)$. Taking the union over all $n\in\bbN$ and using vertex-transitivity of $\mathrm{G}(F(p),F')$ in the sense that $\mathrm{G}(F(p),F')=\langle\mathrm{G}(F(p),F')_{b},\mathrm{U}(\{\id\})\rangle$ we conclude that $\mathrm{Comm}_{\mathrm{G}(F.F')}(S)$ contains $\mathrm{G}(F(p),F')$ as a topological subgroup. Alternatively, use \cite[Lemma 3.2]{Bou16}.

As to $\Gamma_{V/L}(N_{F}(F(p)))$, note that for all $g, s\in\Aut(T_{d})$ and $v\in V$ we have
\begin{align*}
 \sigma(gsg^{-1},v)&=\sigma(g,sg^{-1}v)\sigma(s,g^{-1}v)\sigma(g^{-1},v) \\
 &=\sigma(g,sg^{-1}v)\sigma(s,g^{-1}v)\sigma(g,g^{-1}v)^{-1}.
\end{align*}
Hence if $g\in\Gamma_{V/L}(N_{F}(F(p))/F(p))$, i.e. the coset $\sigma(g,v)F(p)\subseteq N_{F}(F(p))$ is constant on $L$-orbits, then $gLg^{-1}\subseteq\mathrm{U}(F(p))$ whence $g\in\mathrm{Comm}_{\mathrm{G}(F,F')}(S)$.
\end{proof}

\begin{remark}
Whereas the next result provides conditions on $F\le\Sym(\Omega)$ which ensure $\mathrm{U}(F)_{(p)}=\mathrm{G}(F(p),F)$ and we have $\mathrm{U}(F)_{(p)}=\mathrm{U}(F)$ for semiregular $F$ by Proposition \ref{prop:uf_properties}, it may happen that $\mathrm{G}(F(p),F)\lneq\mathrm{U}(F)_{(p)}\lneq\mathrm{U}(F)$. Indeed, if for every $\omega\in\Omega$ there is an element $a_{\omega}\in F_{\omega}$ such that for all $\lambda\in\Omega$ we have $F(p)_{\lambda}\cap a_{\omega}F(p)_{\lambda}a_{\omega}^{-1}=\{\id\}$ then there is an element $g\in\mathrm{U}(F)_{B(b,1)}$ such that for $S:=\mathrm{U}(F(p))_{B(b,1)}$ we have $S\cap gSg^{-1}=\{\id\}$ and therefore $g\notin\mathrm{U}(F)_{(p)}$: Choose the local permutation of $g$ at $v\in V(T_{d})$ to be $a_{\omega}$ whenever $d(v,b)=d(v,b_{\omega})+1$. If in addition $\mathrm{N}_{F}(F(p))\gneq F(p)$ then the assertion holds by virtue of Proposition \ref{prop:localization_uf}. For instance, these assumptions are satisfied for $F=S_{6}$ and $p=3$.
\end{remark}

\begin{theorem}\label{thm:localization_uf}
Let $\smash{F\le F'\le\widehat{F}\le\Sym(\Omega)}$ and $F(p)\le F$ a $p$-Sylow subgroup of~$F$. Assume that we have $F\backslash\Omega=F(p)\backslash\Omega$ and $N_{F_{\omega}'}(F(p)_{\omega})=F(p)_{\omega}$ for all $\omega\in\Omega$. Then~$\mathrm{G}(F,F')_{(p)}=\mathrm{G}(F(p),F')$.
\end{theorem}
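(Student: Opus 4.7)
The plan is to prove both inclusions, the containment $\mathrm{G}(F(p),F') \subseteq \mathrm{G}(F,F')_{(p)}$ following directly from Proposition \ref{prop:localization_uf} once Proposition \ref{prop:local_sylow_orbits} is invoked to guarantee that $F(p)_{\omega} \le F_{\omega}$ is a $p$-Sylow for every $\omega$, so that $S := \mathrm{U}(F(p))_b$ is a local $p$-Sylow subgroup of $\mathrm{G}(F,F')$. For the reverse inclusion, the decomposition $\mathrm{Comm}_{\mathrm{G}(F,F')}(S) = \langle\mathrm{U}(\{\id\}), \mathrm{Comm}_{\mathrm{G}(F,F')_b}(S)\rangle$ from Proposition \ref{prop:localization_uf}, combined with the observation $\mathrm{U}(\{\id\}) \subseteq \mathrm{G}(F(p),F')$, reduces the task to showing $\mathrm{Comm}_{\mathrm{G}(F,F')_b}(S) \subseteq \mathrm{G}(F(p),F')$. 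Since $\mathrm{G}(F,F')_b$ is residually discrete (Proposition \ref{prop:gf_res_disc} when $F \lneq F'$, profinite when $F=F'$), Lemma \ref{lem:comm_normalizer} narrows the goal further to showing that every $g \in \mathrm{G}(F,F')_b$ normalizing some open $L \le S$ lies in $\mathrm{G}(F(p),F')$.

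Given such $g$ and $L$, I would choose $n$ large enough that $L \supseteq L_n := \mathrm{U}(F(p))_{B(b,n)}$ and also that every vertex $v$ with $\sigma(g,v) \notin F$ lies in $B(b,n)$. For $w \notin B(b,n)$ set $\tau_w := \sigma(g,w) \in F$ and let $\omega_w$ denote the color of the edge from $w$ toward $b$; the target is $\tau_w \in F(p)$. Using Remark \ref{rem:uf_elements}, for any prescribed $\rho \in F(p)_{\omega_w}$ I construct $h \in L_n$ that fixes $w$ with $\sigma(h,w) = \rho$. Because $g$ normalizes $L$, the element $ghg^{-1}$ lies in $L \subseteq S \subseteq \mathrm{U}(F(p))$, and the conjugation formula preceding the end of the proof of Proposition \ref{prop:localization_uf} gives $\sigma(ghg^{-1}, g(w)) = \tau_w \rho \tau_w^{-1} \in F(p)$. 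As this element also fixes $\mu := \tau_w(\omega_w)$, we obtain $\tau_w F(p)_{\omega_w} \tau_w^{-1} \subseteq F(p)_{\mu}$; a cardinality comparison, valid because $\tau_w \in F' \le \widehat{F}$ sends $\omega_w$ to a point in the same $F$-orbit and the $F(p)_\lambda$ are $p$-Sylows of the $F_\lambda$ by Proposition \ref{prop:local_sylow_orbits}, promotes this to an equality.

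The main obstacle, and the payoff of the self-normalization hypothesis, is converting this conjugation relation into $\tau_w \in F(p)$. The assumption $F\backslash\Omega = F(p)\backslash\Omega$ ensures that $\omega_w$ and $\mu$ already lie in a common $F(p)$-orbit, so I pick $\sigma \in F(p)$ with $\sigma(\omega_w) = \mu$. Then $\sigma^{-1}\tau_w$ fixes $\omega_w$, lies in $F'$, and normalizes $F(p)_{\omega_w}$, so the hypothesis $N_{F'_{\omega_w}}(F(p)_{\omega_w}) = F(p)_{\omega_w}$ forces $\sigma^{-1}\tau_w \in F(p)$, hence $\tau_w \in F(p)$. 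This shows $\sigma(g,w) \in F(p)$ for all $w$ outside a finite set, establishing $g \in \mathrm{G}(F(p),F')$. It then remains to note that the topologies on $\mathrm{G}(F,F')_{(p)}$ and $\mathrm{G}(F(p),F')$ coincide because each is the unique group topology making $S$ open with its profinite topology inherited from $\mathrm{Aut}(T_d)$.
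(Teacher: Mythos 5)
Your proof is correct, and its skeleton coincides with the paper's: reduce via Proposition \ref{prop:localization_uf} to computing $\mathrm{Comm}_{\mathrm{G}(F,F')_{b}}(S)$, then use residual discreteness (Proposition \ref{prop:gf_res_disc}) together with Lemma \ref{lem:comm_normalizer} to replace commensuration by normalization of some open $L\le S$. The difference lies in the endgame, where the two hypotheses actually get used. The paper argues by contradiction: it builds a correcting element $h\in\mathrm{G}(F(p),F')$ with $h|_{B(b,n-1)}=g$ and $F(p)$-local permutations beyond, so that $h^{-1}g$ fixes $B(b,n)$ pointwise, and then extracts an element of $N_{F'_{\omega}}(F(p)_{\omega})\setminus F(p)_{\omega}$ from a local permutation of $h^{-1}g$ on $S(b,n)$. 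You instead argue directly at each vertex $w$ outside $B(b,n)$: conjugating elements of $L_{n}$ supported below $w$ yields $\tau_{w}F(p)_{\omega_{w}}\tau_{w}^{-1}=F(p)_{\tau_{w}(\omega_{w})}$, and the hypothesis $F\backslash\Omega=F(p)\backslash\Omega$ supplies $\sigma\in F(p)$ transporting $\tau_{w}(\omega_{w})$ back to $\omega_{w}$, so that $\sigma^{-1}\tau_{w}\in N_{F'_{\omega_{w}}}(F(p)_{\omega_{w}})=F(p)_{\omega_{w}}$ and hence $\tau_{w}\in F(p)$. The underlying mechanism (conjugation of point stabilizers forces normalization forces membership) is identical, but your route dispenses with the auxiliary element $h$ and makes explicit both the role of the orbit hypothesis in transporting stabilizers and the cardinality step promoting the inclusion of conjugated stabilizers to an equality, both of which the paper leaves largely implicit; the cost is a somewhat longer pointwise verification. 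The only blemishes are cosmetic: the symbol $\sigma$ clashes with the local-permutation map, and the appeal to $p$-Sylowness in the cardinality comparison is unnecessary (equality of the $F(p)$-orbits of $\omega_{w}$ and $\mu$ already gives $|F(p)_{\omega_{w}}|=|F(p)_{\mu}|$).
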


If $F$ does not fix a point of $\Omega$ and $F\backslash\Omega=F(p)\backslash\Omega$ then $p$ divides $|\Omega|$. By Proposition \ref{prop:local_sylow_orbits} the same assumption implies that the point stabilizers in $F(p)$ are $p$-Sylow subgroups of the respective point stabilizers in $F$. In the case $F=F'$, the theorem asks that these be self-normalizing.

\begin{proof}
(Theorem \ref{thm:localization_uf}). By Proposition \ref{prop:local_sylow_uf} and Proposition \ref{prop:localization_uf} it suffices to show that $\mathrm{Comm}_{\mathrm{G}(F,F')_{b}}(\mathrm{U}(F(p))_{b})\!=\!\mathrm{G}(F(p),F')_{b}$. By Proposition \ref{prop:localization_uf}, the group $\mathrm{G}(F(p),F')_{b}$ is a subgroup of said commensurator.

Now suppose $g\in\mathrm{Comm}_{\mathrm{G}(F,F')_{b}}(\mathrm{U}(F(p))_{b})\le\mathrm{G}(F,F')_{b}$. Given that $\mathrm{G}(F,F')_{b}$ is residually discrete by Proposition \ref{prop:gf_res_disc}, the element $g$ normalizes an open subgroup $L\le\mathrm{U}(F(p))_{b}$ by virtue of Lemma \ref{lem:comm_normalizer}. If $g$ has only finitely many local permutations in $F'\backslash F(p)$ then $g\in G(F(p),F')_{b}$. Otherwise, the above implies that there is $n\in\bbN$ such that $g\mathrm{U}(F(p))_{B(b,n)}g^{-1}\subseteq L\subseteq\mathrm{U}(F(p))_{b}$ and $g$ has a local permutation in $F'\backslash F(p)$ on $S(b,n)$. Then construct $h\in\mathrm{G}(F(p),F')$ with local permutations in $F(p)$ on spheres of radius at least $n$ and such that $h^{-1}g$ fixes $B(b,n)$ pointwise as follows: Set $h|_{B(b,n-1)}:=g$ and use the assumption $F'\backslash\Omega=F\backslash\Omega=F(p)\backslash\Omega$ to extend $h$ to all $T_{d}$ using $F(p)$ only. Then $h^{-1}g$ has a local permutation in $F_{\omega}'\backslash F(p)_{\omega}$ for some $\omega\in\Omega$ on $S(b,n)$ and $(h^{-1}g)\mathrm{U}(F(p))_{B(b,n)}(h^{-1}g)^{-1}\subseteq L\subseteq\mathrm{U}(F(p))_{b}$. However, this contradicts the assumption $N_{F_{\omega}'}(F(p)_{\omega})=F(p)_{\omega}$ for all $\omega\in\Omega$.
\end{proof}

Theorem \ref{thm:localization_uf} can be used to determine the $p$-localization of Lederle's coloured Neretin group $\mathrm{N}(F)$ under similar assumptions.

\begin{theorem}\label{thm:localization_nf}
Let $F\le\Sym(\Omega)$ and $F(p)\le F$ a $p$-Sylow subgroup. If $F\backslash\Omega=F(p)\backslash\Omega$ and $N_{\widehat{F}_{\omega}}(F(p)_{\omega})=F(p)_{\omega}$ for all $\omega\in\Omega$ then $\mathrm{N}(F)_{(p)}=\mathrm{N}(F(p))$.
\end{theorem}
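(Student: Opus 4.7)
The plan is to reduce Theorem \ref{thm:localization_nf} to the instance of Theorem \ref{thm:localization_uf} with $F' = \widehat{F}$, which yields $\mathrm{G}(F)_{(p)} = \mathrm{G}(F(p))$ under precisely the hypotheses in force. The key step is to factor an arbitrary element of $\mathrm{N}(F)_{(p)}$ as a product of a $\mathrm{U}(F(p))$-honest almost automorphism and an honest automorphism that necessarily lies in $\mathrm{G}(F)_{(p)}$.

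First I would assemble the setup. By Proposition \ref{prop:local_sylow_orbits} the assumption $F\backslash\Omega = F(p)\backslash\Omega$ yields $F(p)_{\omega} \le F_{\omega}$ as a $p$-Sylow subgroup for every $\omega \in \Omega$; Proposition \ref{prop:local_sylow_uf} then gives that $S := \mathrm{U}(F(p))_{b}$ is a local $p$-Sylow subgroup of $\mathrm{U}(F)$, and thus of $\mathrm{N}(F)$ since the inclusion $\mathrm{U}(F) \hookrightarrow \mathrm{N}(F)$ is continuous and open. The inclusion $\mathrm{N}(F(p)) \subseteq \mathrm{N}(F)_{(p)}$ is then immediate: $\mathrm{U}(F(p))$ is open in $\mathrm{N}(F(p))$, so $\mathrm{N}(F(p))$ commensurates $S$ intrinsically, and commensuration is preserved under the continuous inclusion $\mathrm{N}(F(p)) \hookrightarrow \mathrm{N}(F)$ induced by $\mathrm{U}(F(p)) \le \mathrm{U}(F)$.

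For the reverse inclusion, take $[\varphi] \in \mathrm{N}(F)_{(p)}$ represented by $\varphi \colon T_{d} \backslash T \to T_{d} \backslash T'$ with witnesses $h_{v} \in \mathrm{U}(F)$ satisfying $\varphi|_{T_{v}} = h_{v}|_{T_{v}}$ for every $v \in L(T)$. I would then build a $\mathrm{U}(F(p))$-honest companion $\chi \colon T_{d} \backslash T \to T_{d} \backslash T'$ with the same combinatorial pattern, namely witnesses $k_{v} \in \mathrm{U}(F(p))$ with $\chi(v) = \varphi(v)$ and $\chi(T_{v}) = T'_{\varphi(v)}$. The constraint reduces to finding $\sigma(k_{v}, v) \in F(p)$ that maps the color of $v$'s parent edge in $T$ to the color of $\varphi(v)$'s parent edge in $T'$; such an element exists because these two colors lie in a common $F$-orbit (they are identified by $\sigma(h_{v}, v) \in F$), which by the orbit equality hypothesis is also a single $F(p)$-orbit. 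Remark \ref{rem:uf_elements} extends this local data to a global element $k_{v} \in \mathrm{U}(F(p))$.

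With $[\chi] \in \mathrm{N}(F(p))$ constructed, $g := [\chi]^{-1}[\varphi]$ is represented by a forest isomorphism $T_{d} \backslash T \to T_{d} \backslash T$ that fixes each $v \in L(T)$ and sends $T_{v}$ onto itself, so it extends uniquely to an honest automorphism $g \in \mathrm{U}(F)_{T} \subseteq \mathrm{G}(F)$. Since $\mathrm{N}(F)_{(p)}$ is a group containing $[\chi]$ and $[\varphi]$, it contains $g$, so $g \in \mathrm{G}(F) \cap \mathrm{Comm}_{\mathrm{N}(F)}(S) = \mathrm{Comm}_{\mathrm{G}(F)}(S) = \mathrm{G}(F)_{(p)}$. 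Theorem \ref{thm:localization_uf} applied with $F' = \widehat{F}$, whose hypotheses are exactly those in force, then yields $g \in \mathrm{G}(F(p)) \subseteq \mathrm{N}(F(p))$, and hence $[\varphi] = [\chi] \, g \in \mathrm{N}(F(p))$. The main obstacle is the combinatorial realization of $\chi$ with the prescribed boundary data---this is where transitivity of $F(p)$ on each $F$-orbit, supplied by the orbit equality hypothesis, and the freedom of construction in Remark \ref{rem:uf_elements} are essential.
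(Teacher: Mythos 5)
Your proposal is correct and follows essentially the same route as the paper: construct a $\mathrm{U}(F(p))$-honest almost automorphism with the same combinatorial pattern using the orbit hypothesis $F\backslash\Omega=F(p)\backslash\Omega$, observe that the quotient is an honest automorphism lying in $\mathrm{Comm}_{\mathrm{G}(F)}(S)$, and finish with Theorem \ref{thm:localization_uf} applied with $F'=\widehat{F}$. The only cosmetic differences are that the paper multiplies on the left by an auxiliary element $h:T_{d}\backslash T'\to T_{d}\backslash T$ rather than forming $[\chi]^{-1}[\varphi]$, and it explicitly invokes Tits' Independence Property (Proposition \ref{prop:uf_tits}) to justify that the resulting automorphism fixing $T$ lies in $\mathrm{U}(F)$, a step you assert but should cite.
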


\begin{proof}
By Proposition \ref{prop:local_sylow_uf}, the group $S:=\mathrm{U}(F(p))_{b}$ is a local Sylow subgroup of $\mathrm{N}(F)$. Furthermore, by \cite[Proposition 2.24]{Led17}, we have $\mathrm{N}(F(p))\le\mathrm{Comm}_{\mathrm{N}(F)}(S)$. Now, let $g\in\mathrm{Comm}_{\mathrm{N}(F)}(S)$ and let $g:T_{d}\backslash T\to T_{d}\backslash T'$ be a representative of $g$ as an $\mathrm{U}(F)$-honest almost automorphism. Given that $F\backslash\Omega=F(p)\backslash\Omega$ there is a $\mathrm{U}(F(p))$-honest almost automorphism $h\in\mathrm{N}(F(p))\le\mathrm{Comm}_{\mathrm{N}(F)}(S)$ with representative $h:T_{d}\backslash T'\to T_{d}\backslash T$ such that $hg:T_{d}\backslash T\to T_{d}\backslash T$ fixes the leaves of $T$ and therefore extends to an autormorphism of $T_{d}$ fixing $T$. Furthermore, on each connected component of $T_{d}\backslash T$, the automorphism $hg\in\mathrm{N}(F)\cap\Aut(T_{d})$ coincides with an element of $\mathrm{U}(F)$. Hence, using Proposition \ref{prop:uf_tits}, we have $hg\in\mathrm{U}(F)$ and therefore
\begin{displaymath}
 hg\in\mathrm{Comm}_{\mathrm{N}(F)\cap\Aut(T_{d})}(S)=\mathrm{Comm}_{\mathrm{G}(F)}(S)=\mathrm{G}(F)_{(p)}=\mathrm{G}(F(p))\le\mathrm{N}(F(p)).
\end{displaymath}
by Theorem \ref{thm:localization_uf}. Given that $h\in\mathrm{N}(F(p))$ we conclude $g\in\mathrm{N}(F(p))$ as required.
\end{proof}

Proposition \ref{prop:localization_uf} suggests that Theorem \ref{thm:localization_uf} might hold as soon as $F(p)$ is self-normalizing in $F'$. This is not the case as the following remark shows.

\begin{remark}\label{rem:localization_uf}
Theorem \ref{thm:localization_uf} does not hold when the condition $N_{F_{\omega}'}(F(p)_{\omega})=F(p)_{\omega}$ for all $\omega\in\Omega$ is replaced with $N_{F'}(F(p))=F(p)$: There are transitive, non-regular permutation groups $F\le\Sym(\Omega)$ and primes $p$ such that $F\backslash\Omega=F(p)\backslash\Omega$ and $N_{F}(F(p))=F(p)$ for which $F(p)$ is regular. In particular, $N_{F_{\omega}}(F(p)_{\omega})\gneq F(p)_{\omega}$. In this case, $\mathrm{U}(F(p))_{b}$ is a local $p$-Sylow subgroup of $\mathrm{U}(F)$ by Proposition \ref{prop:local_sylow_orbits}. However, $\mathrm{U}(F(p))_{b}\cong F(p)$ is finite and hence $\mathrm{U}(F)_{(p)}=\mathrm{U}(F)\gneq\mathrm{G}(F(p),F)$.

A small example of this situation is a certain $F\cong S_{4}\le S_{8}$ and the prime $p=2$, namely put $F:=\langle(123)(456),(14)(25)(37)(68)\rangle$. Here, $F(2)$ is regular and self-normalizing in $F$ of order eight.
\end{remark}

\bibliographystyle{amsalpha}
\bibliography{localization}

\end{document}